\documentclass[11pt]{amsart}
\usepackage[a4paper,margin=30mm]{geometry}
\usepackage{amsmath,amssymb,amsthm,mathtools}
\usepackage{microtype}
\usepackage{enumitem}
\usepackage{hyperref}

\newtheorem{theorem}{Theorem}[section]
\newtheorem{lemma}[theorem]{Lemma}

\theoremstyle{remark}

\newcommand{\cyc}{\sum_{\mathrm{cyc}}}

\title{A Proof of Liu's Conjecture on the Fundamental Triangle Inequality}
\author{Tserendorj Batbold}
\address{
Department of Mathematics\\
National University of
Mongolia\\
Ulaanbaatar, Mongolia}
\email{tsbatbold@hotmail.com}

\begin{document}
\keywords{triangle inequality; circumradius; inradius; Ravi substitution}

\subjclass[2010]{51M16, 26D15}
\maketitle

\begin{abstract}
Let $a,b,c$ be the side lengths of a triangle, and let $R$ and $r$
denote its circumradius and inradius, respectively. Liu proposed the
conjecture
\[
 \cyc \left(\frac{a(b+c-a)}{bc}\right)^k
 \ge 2+\left(\frac{2r}{R}\right)^k,\qquad k>1,
\]
with the reverse inequality for $k<1$. We prove this conjecture by
reducing it to an algebraic inequality for three positive variables
with prescribed sum and product. We also determine the equality cases.
\end{abstract}

\section{Introduction}

The fundamental triangle inequality states that
\[
2R^2+10Rr-r^2-2(R-2r)\sqrt{R^2-2Rr}
\le s^2
\le
2R^2+10Rr-r^2+2(R-2r)\sqrt{R^2-2Rr},
\]
where $s$, $R$, and $r$ denote the semiperimeter, circumradius, and
inradius of a nondegenerate triangle $ABC$, respectively. Equivalently,
\begin{equation}\label{eq:fundamental}
 s^4-(4R^2+20Rr-2r^2)s^2+r(4R+r)^3\le0.
\end{equation}
This is a classical relation among the basic metric elements of a
triangle; see, for example, Mitrinovi\'c, Pe\v{c}ari\'c and Volenec
\cite{Mitrinovic1989}.

Let $a,b,c$ be the side lengths of $ABC$. In \cite{Liu2022}, Liu
obtained
\begin{equation}\label{eq:k2}
 \cyc \left(\frac{a(b+c-a)}{bc}\right)^2
 \ge 2+\left(\frac{2r}{R}\right)^2
\end{equation}
and showed that \eqref{eq:k2} is equivalent to
\eqref{eq:fundamental}. He also obtained
\begin{equation}\label{eq:kminus2}
 \cyc \left(\frac{a(b+c-a)}{bc}\right)^{-2}
 \le 2+\left(\frac{2r}{R}\right)^{-2},
\end{equation}
which is also equivalent to \eqref{eq:fundamental}. Thus
\eqref{eq:k2} and \eqref{eq:kminus2} are the cases $k=2$ and $k=-2$,
respectively, of the same family of inequalities. Liu formulated their
generalization as Conjecture~5.1 in \cite{Liu2022}: for every real
$k>1$,
\begin{equation}\label{eq:liu}
 \cyc \left(\frac{a(b+c-a)}{bc}\right)^k
 \ge 2+\left(\frac{2r}{R}\right)^k,
\end{equation}
with the reverse inequality for $k<1$. For $k=1$, the corresponding
relation is the identity
\begin{equation}\label{eq:k1}
 \cyc \frac{a(b+c-a)}{bc}
 =2+\frac{2r}{R}.
\end{equation}

The purpose of this paper is to prove this conjecture. After the Ravi
substitution, the problem reduces to an algebraic inequality for three
positive variables with prescribed sum and product. This reduction also
determines the equality cases.

\section{Proof of the conjecture}
We begin with the following auxiliary result.

\begin{lemma}\label{lem:power}
Let $0<q\le 1$ and let $x,y,z>0$ satisfy
\begin{equation}\label{eq:constraints}
 x+y+z=2+q,\qquad xyz=q.
\end{equation}
Then
\begin{equation}\label{eq:power}
 x^k+y^k+z^k
 \begin{cases}
 \ge 2+q^k, & k>1,\\[2mm]
 \le 2+q^k, & k<1.
 \end{cases}
\end{equation}
If $k\ne0,1$, equality holds if and only if $(x,y,z)$ is a permutation of
$(q,1,1)$. For $k=0$ and $k=1$, equality holds for every
$(x,y,z)$ satisfying \eqref{eq:constraints}.
\end{lemma}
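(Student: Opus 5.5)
The plan is to treat $x,y,z$ as the roots of the cubic
\[
P_p(t)=t^3-(2+q)t^2+pt-q,\qquad p=xy+yz+zx,
\]
and to study $S_k=x^k+y^k+z^k$ as a function of the single free parameter $p$. The sum $2+q$ and the product $q$ are fixed. First I dispose of $q=1$. Then $x+y+z=3$ and $xyz=1$, so AM--GM forces $x=y=z=1$, which is the configuration $(q,1,1)$, and everything is trivial. The cases $k=0$ and $k=1$ are also immediate: $S_0=3=2+q^0$ and $S_1=2+q$. So assume $0<q<1$ and $k\ne0,1$.

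\emph{Step 1: the admissible range of $p$.} The admissible $p$ are those for which $P_p$ has three positive real roots, i.e.\ its discriminant is nonnegative. Positivity of the roots is automatic from the signs of the coefficients. This is an interval $[p_-,p_+]$ whose endpoints correspond to a double root. Setting $y=z=t$, $x=q/t^2$ gives
\[
2t^3-(2+q)t^2+q=(t-1)(2t^2-qt-q)=0,
\]
so $t=1$ or $t=t_0=\tfrac{q+\sqrt{q^2+8q}}{4}$. The corresponding values are $p=2q+1$ for the triple $(q,1,1)$ and $p=2q/t_0+t_0^2$. I must check that $2q+1$ is the larger one, so that $(q,1,1)$ corresponds to $p_+$. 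Using $2t_0^2=q(t_0+1)$, this reduces to a one-variable inequality in $q\in(0,1)$. I expect this to be the main computational obstacle, though it should be routine. If necessary I will compare $p$ directly as a function of $t$ along the curve $y=z$.

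\emph{Step 2: monotonicity in $p$.} For $p\in(p_-,p_+)$ the roots are simple and depend smoothly on $p$. Differentiating $P_p(x_i)=0$ gives $dx_i/dp=-x_i/P_p'(x_i)$, hence
\[
\frac{dS_k}{dp}=-k\sum_i\frac{x_i^{k}}{P_p'(x_i)}=-k\,[x,y,z]\,t^k ,
\]
the second divided difference of $t\mapsto t^k$. By the mean value theorem for divided differences this equals $\tfrac12 k(k-1)\xi^{k-2}$ for some $\xi>0$. Therefore $dS_k/dp$ has the sign of $-k^2(k-1)$. It is negative for $k>1$ and positive for $k<1$ with $k\neq0$.

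\emph{Step 3: conclusion.} Because $S_k$ is continuous on $[p_-,p_+]$ and strictly monotone inside, for $k>1$ it attains its strict minimum at $p_+$. For $k<1$ it attains its strict maximum at $p_+$. At $p_+$ the triple is a permutation of $(q,1,1)$, where $S_k=2+q^k$. This yields \eqref{eq:power}. Strictness shows that equality occurs only when $p=p_+$, i.e.\ only for permutations of $(q,1,1)$.
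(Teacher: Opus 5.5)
Your proof is correct, and it follows a genuinely different route from the paper's.

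\textbf{The paper's route.} It works on the compact constraint curve $\mathcal S_q$ and verifies the Lagrange constraint qualification. It then applies Rolle's theorem to $\phi(t)=kt^k-\lambda t-\mu q$, which forces two equal coordinates at every extremum. Finally it compares the two resulting candidates, $F_1=2+q^k$ and $F_2=(2/(1+t))^k+2t^k$. That comparison is a $k$-dependent convexity estimate, split into the cases $k>1$, $0<k<1$ and $k<0$.

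\textbf{Your route.} You parametrize the whole constraint set by $p=xy+yz+zx$. The identity $dS_k/dp=-k\,[x,y,z]\,t^k=-\tfrac12k^2(k-1)\xi^{k-2}$ then shows that $S_k$ is strictly monotone in $p$.

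\textbf{Comparison.} The analytic core is the same: Rolle's theorem in the paper, the mean value theorem for divided differences in yours. What your version buys is the following.
\begin{itemize}
\item You get monotonicity of $S_k$ along the entire constraint set, not just the location of its critical points.
\item You need no compactness or constraint-qualification argument.
\item Deciding which two-equal-variable configuration is extremal becomes a $k$-independent comparison of two $p$-values, instead of a case-split convexity estimate.
\end{itemize}
The paper's version, in exchange, avoids smooth dependence of roots on coefficients and divided differences, using only standard multivariable calculus.

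\textbf{Closing Step 1.} The step you left open, which you expected to be the main obstacle, is immediate. For $t>0$ one has $P_p(t)=t\,(p-h(t))$ with $h(t)=q/t+(2+q)t-t^2$. Moreover, $h'(t)=0$ is exactly $(t-1)(2t^2-qt-q)=0$.

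Hence $h$ decreases on $(0,t_0)$, increases on $(t_0,1)$ and decreases on $(1,\infty)$, with $h(0^+)=+\infty$ and $h(+\infty)=-\infty$. This gives three things at once:
\begin{itemize}
\item the admissible values of $p$ are exactly $[h(t_0),h(1)]$, and $h(1)=2q+1$ is the larger endpoint;
\item the three roots are simple for $p$ in the open interval;
\item positivity of the roots is automatic.
\end{itemize}
If you prefer an explicit check, $q=2t_0^2/(1+t_0)$ gives $p_+-p_-=(1-t_0)^3/(1+t_0)>0$.
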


\begin{proof}
For $k=0$ and $k=1$, the assertion follows immediately from
\eqref{eq:constraints}. Hence assume $k\ne0,1$.

If $q=1$, then $x+y+z=3$ and $xyz=1$. Equality in AM--GM gives
$x=y=z=1$, and the assertion follows. Hence assume $0<q<1$.

Consider
\[
 \mathcal S_q=\{(x,y,z)\in(0,\infty)^3:x+y+z=2+q,\ xyz=q\}.
\]
This set is nonempty, since $(q,1,1)\in\mathcal S_q$, and it is compact. Indeed, every coordinate is at most $2+q$. Moreover,
\[
 yz\le \left(\frac{y+z}{2}\right)^2\le\frac{(2+q)^2}{4},
\]
so
\[
 x=\frac{q}{yz}\ge\frac{4q}{(2+q)^2},
\]
and cyclically the same positive lower bound holds for $y$ and $z$. Hence $\mathcal S_q$ is a closed subset of a compact box. Thus the continuous function
\[
 F(x,y,z)=x^k+y^k+z^k
\]
attains its minimum and maximum on $\mathcal S_q$.

We next verify that the constraint gradients are independent on $\mathcal S_q$. If
\[
 (1,1,1)\quad\text{and}\quad(yz,zx,xy)
\]
were linearly dependent, positivity would imply $x=y=z$. Then \eqref{eq:constraints} would yield
\[
 \left(\frac{2+q}{3}\right)^3=q,
\]
or
\[
 (2+q)^3-27q=(q-1)^2(q+8)=0,
\]
contrary to $0<q<1$. Hence the Lagrange multiplier conditions apply at every extremum of $F$ on $\mathcal S_q$.

At an extremum there exist real $\lambda,\mu$ such that
\[
 kx^{k-1}=\lambda+\mu yz,
\]
and cyclically. Since $xyz=q$, multiplication by the corresponding variable gives
\[
 kx^k-\lambda x-\mu q=0,
\]
with the corresponding equations for $y$ and $z$. Hence $x,y,z$ are positive zeros of
\[
 \phi(t)=kt^k-\lambda t-\mu q.
\]
For $t>0$,
\[
 \phi''(t)=k^2(k-1)t^{k-2},
\]
so $\phi'$ is strictly monotone. Hence $\phi$ has at most two positive
zeros: three distinct positive zeros would, by Rolle's theorem, give two
distinct zeros of $\phi'$, which is impossible. Therefore at least two
of $x,y,z$ are equal at every extremum.

By symmetry, write $y=z=t$. From \eqref{eq:constraints},
\[
 x=\frac{q}{t^2},\qquad \frac{q}{t^2}+2t=2+q.
\]
Hence
\[
 2t^3-(2+q)t^2+q=0,
\]
which factors as
\begin{equation}\label{eq:factor}
 (t-1)(2t^2-qt-q)=0.
\end{equation}

If $t=1$, then $(x,y,z)=(q,1,1)$ and
\begin{equation}\label{eq:F1}
 F_1=2+q^k.
\end{equation}

For the other positive solution of \eqref{eq:factor},
\[
 q=\frac{2t^2}{1+t},\qquad x=\frac{2}{1+t}.
\]
Since $0<q<1$, this positive solution satisfies $0<t<1$. The corresponding value of $F$ is
\[
 F_2=\left(\frac{2}{1+t}\right)^k+2t^k.
\]
Using $q=2t^2/(1+t)$, we obtain
\begin{align*}
 F_2-F_1
 &=\left(\frac{2}{1+t}\right)^k+2t^k-2
   -\left(\frac{2t^2}{1+t}\right)^k\\
 &=(1-t^k)\left[\left(\frac{2}{1+t}\right)^k(1+t^k)-2\right].
\end{align*}
For $k>1$, strict convexity of $u\mapsto u^k$ gives
\[
 \frac{1+t^k}{2}>\left(\frac{1+t}{2}\right)^k.
\]
Since $1-t^k>0$, it follows that $F_2>F_1$. Thus the global minimum is
$F_1$, proving the first inequality in \eqref{eq:power}.

If $0<k<1$, strict concavity gives
\[
 \frac{1+t^k}{2}<\left(\frac{1+t}{2}\right)^k.
\]
Since $1-t^k>0$, we obtain $F_2<F_1$.

If $k<0$, the function $u\mapsto u^k$ is strictly convex, and hence
\[
 \frac{1+t^k}{2}>\left(\frac{1+t}{2}\right)^k.
\]
In this case $1-t^k<0$, so again $F_2<F_1$. Therefore $F_1$ is the
global maximum for every $k<1$, $k\ne0$, and the second inequality in
\eqref{eq:power} follows.

The strictness of the above comparisons for $0<t<1$ shows that, for
$k\ne0,1$, equality can occur only when $(x,y,z)$ is a permutation of
$(q,1,1)$. Conversely, every permutation of $(q,1,1)$ gives equality.
\end{proof}

\begin{theorem}\label{thm:main}
Let $ABC$ be a nondegenerate triangle with side lengths $a,b,c$, circumradius $R$, and inradius $r$. If $k>1$, then
\begin{equation}\label{eq:mainplus}
 \cyc \left(\frac{a(b+c-a)}{bc}\right)^k
 \ge 2+\left(\frac{2r}{R}\right)^k.
\end{equation}
If $k<1$, then
\begin{equation}\label{eq:mainminus}
 \cyc \left(\frac{a(b+c-a)}{bc}\right)^k
 \le 2+\left(\frac{2r}{R}\right)^k.
\end{equation}
For $k\ne0,1$, equality holds if and only if $ABC$ is isosceles. For
$k=0$ and $k=1$, equality holds for every triangle.
\end{theorem}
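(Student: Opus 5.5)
The plan is to reduce Theorem~\ref{thm:main} to Lemma~\ref{lem:power} by setting
\[
 x=\frac{a(b+c-a)}{bc},\qquad y=\frac{b(c+a-b)}{ca},\qquad z=\frac{c(a+b-c)}{ab},\qquad q=\frac{2r}{R}.
\]
The triangle inequalities give $x,y,z>0$, and Euler's inequality $R\ge 2r$ gives $0<q\le1$. It then remains to check the two constraints \eqref{eq:constraints}, after which \eqref{eq:power} is exactly \eqref{eq:mainplus} and \eqref{eq:mainminus}.

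\textbf{Product constraint.} I would use the Ravi substitution $a=v+w$, $b=w+u$, $c=u+v$ with $u,v,w>0$, so that $b+c-a=2u$ and cyclically. Then
\[
 xyz=\frac{abc\,(b+c-a)(c+a-b)(a+b-c)}{(abc)^2}=\frac{8uvw}{abc}.
\]
On the other side, $s=u+v+w$, $\Delta^2=s\,uvw$, $r=\Delta/s$ and $R=abc/(4\Delta)$ give
\[
 \frac{2r}{R}=\frac{8\Delta^2}{s\,abc}=\frac{8uvw}{abc}.
\]
Hence $xyz=q$.

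\textbf{Sum constraint.} This is the identity \eqref{eq:k1}. If it is to be verified rather than cited, I would clear denominators: $x+y+z=2+q$ becomes
\[
 \sum a^2(b+c-a)=2abc+(b+c-a)(c+a-b)(a+b-c).
\]
This is a routine symmetric polynomial identity, and both sides expand to $\sum_{\mathrm{sym}}a^2b-\sum a^3$ plus the same multiple of $abc$. This expansion is the only computational step, and I expect it to be the main (if mild) obstacle; Ravi variables make it straightforward.

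\textbf{Equality cases.} For $k=0$ and $k=1$ equality is automatic, by the constraints themselves. For $k\ne0,1$, Lemma~\ref{lem:power} says equality holds iff $(x,y,z)$ is a permutation of $(q,1,1)$. Given the constraints, this is equivalent to one of $x,y,z$ being $1$: if $y=z=1$ then $x=q$ is forced by $xyz=q$. The factorization
\[
 a(b+c-a)-bc=(a-c)(b-a)
\]
shows $x=1$ iff $a=b$ or $a=c$, and similarly for $y$ and $z$. So some coordinate equals $1$ iff the triangle is isosceles. Conversely, if say $b=c$, then $y=z=1$, and the constraints force $x=q$. The equilateral case, $q=1$, is included. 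This yields the stated equality characterization.
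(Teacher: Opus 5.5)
Your proposal is correct and follows essentially the same route as the paper. You use the Ravi substitution to get $xyz=8uvw/(abc)=2r/R$ via Heron's formula, verify $x+y+z=2+q$ as the identity \eqref{eq:k1}, invoke Euler's inequality for $0<q\le1$, and apply Lemma~\ref{lem:power}. Your equality argument via $a(b+c-a)-bc=(a-c)(b-a)$, where a single coordinate equal to $1$ already forces an isosceles triangle, is slightly more streamlined than the paper's, which handles $Y=Z=1$ together. The reason you attach to the intermediate ``equivalent to one of $x,y,z$ being $1$'' only addresses two coordinates equal to $1$, but your final argument uses just the trivial direction plus the direct converse, so nothing is lost.
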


\begin{proof}
Use the Ravi substitution
\[
 a=v+w,\qquad b=w+u,\qquad c=u+v,
\]
where $u=s-a$, $v=s-b$, $w=s-c$ are positive. Set
\[
 X=\frac{a(b+c-a)}{bc},\qquad
 Y=\frac{b(c+a-b)}{ca},\qquad
 Z=\frac{c(a+b-c)}{ab}.
\]
Then
\[
 X=\frac{2u(v+w)}{(u+v)(u+w)},\quad
 Y=\frac{2v(w+u)}{(v+w)(u+v)},\quad
 Z=\frac{2w(u+v)}{(u+w)(v+w)}.
\]
A direct multiplication gives
\begin{equation}\label{eq:prodXYZ}
 XYZ=\frac{8uvw}{(u+v)(v+w)(w+u)}.
\end{equation}
Also, direct simplification gives
\begin{equation}\label{eq:sumXYZ}
 X+Y+Z=2+\frac{8uvw}{(u+v)(v+w)(w+u)}.
\end{equation}
The expression on the right-hand side of \eqref{eq:prodXYZ} has a standard geometric form. Heron's formula gives
\[
 \Delta^2=(u+v+w)uvw,
\]
while
\[
 r=\frac{\Delta}{u+v+w},\qquad
 R=\frac{(u+v)(v+w)(w+u)}{4\Delta}.
\]
Therefore,
\begin{equation}\label{eq:q}
 \frac{2r}{R}=\frac{8uvw}{(u+v)(v+w)(w+u)}.
\end{equation}
Combining \eqref{eq:prodXYZ}--\eqref{eq:q},
\[
 X+Y+Z=2+q,\qquad XYZ=q,
 \qquad q:=\frac{2r}{R}.
\]
By Euler's inequality $R\ge2r$, we have $0<q\le1$. Applying Lemma~\ref{lem:power} to $X,Y,Z$ gives \eqref{eq:mainplus} and \eqref{eq:mainminus}.

We now determine the equality case. By Lemma~\ref{lem:power}, for
$k\ne0,1$ equality is equivalent to $(X,Y,Z)$ being a permutation of
$(q,1,1)$. Suppose $Y=Z=1$. From the formulas above,
\[
 Y=1\iff (v-w)(u-v)=0,
\]
and
\[
 Z=1\iff (w-v)(u-w)=0.
\]
If $v\ne w$, these two relations would force $u=v=w$, a contradiction. Hence $v=w$, equivalently $b=c$. The other permutations give $c=a$ or $a=b$. Conversely, if two sides are equal, two of $X,Y,Z$ equal $1$ and the third equals $XYZ=q$, so equality follows. Thus, for $k\ne0,1$, equality holds exactly for isosceles triangles.
For $k=0$ the two sides of \eqref{eq:mainminus} are equal to $3$, while
for $k=1$ equality follows from \eqref{eq:k1}.
\end{proof}

\end{document}